\newtheorem{theorem}{Theorem}[section]
\theoremstyle{definition}
\newtheorem{definition}{Definition}[section]
\newtheorem{corollary}{Corollary}[section]
\newtheorem{example}{Example}[section]
\theoremstyle{remark}
\numberwithin{equation}{section}
\begin{document}
\title[Invariant submanifolds of $(LCS)_n$-Manifolds ]{Invariant submanifolds of $(LCS)_n$-Manifolds with respect to quarter symmetric metric connection}
\author[S. K. Hui, L.-I. Piscoran and T. Pal]{SHYAMAL KUMAR HUI, LAURIAN-IOAN PISCORAN and TANUMOY PAL}
\subjclass[2000]{53C15, 53C25.} \keywords{$(LCS)_n$-manifold, invariant submanifold, quarter symmetric metric connection}.
\begin{abstract}
The object of the present paper is to study invariant submanifolds of $(LCS)_n$-manifolds with respect to quarter symmetric metric connection. It is
shown that the mean curvature of an invariant submanifold of $(LCS)_{n}$-manifold with respect to quarter symmetric metric connection and Levi-Civita
connection are equal. An example is constructed to illustrate the results of the paper. We also obtain some equivalent conditions of such notion.
\end{abstract}
\maketitle
\section{Introduction}
\indent It is known that a connection $\nabla$ on a Riemannian manifold $M$ is called a metric connection if there is a Riemannian metric $g$ on $M$ such that $\nabla g=0$, otherwise it is non-metric. In 1924, Friedman and Schouten \cite{FRSC} introduced the notion of semi-symmetric linear connection on a differentiable manifold. In 1932, Hayden \cite{HAYDEN} introduced the idea of metric connection with torsion on a Riemannian manifold. In 1970, Yano \cite{YANO} studied some curvature tensors and conditions for semi-symmetric connections in Riemannian manifolds. In 1975, Golab \cite{GOLAB} defined and studied quarter symmetric linear connection on a differentiable manifold. A linear connection $\overline{\nabla}$ in an n-dimensional Riemannian manifold is said to be a quarter symmetric connection \cite{GOLAB} if torsion tensor $T$ is of the form
\begin{equation}\label{1.1}
T(X,Y)= \overline{\nabla}_XY - \overline{\nabla}_YX - [X,Y] = A(Y) K(X)-A(X)K(Y)
\end{equation}
where $A$ is a 1-form and $K$ is a tensor of type (1,1). If a quarter symmetric linear connection $\overline{\nabla}$ satisfies the condition
\begin{equation*}
(\overline{\nabla}_Xg)(Y,Z)=0
\end{equation*}
for all $X$, $Y$, $Z\in \chi(M)$, where $\chi(M)$ is a Lie algebra of vector fields on the manifold $M$, then $\overline{\nabla}$ is said to be a quarter symmetric metric connection. For a contact metric manifold admitting quarter symmetric connection, we can take $A=\eta$ and $K=\phi$ and hence  (\ref{1.1}) takes in the form
\begin{equation}\label{1.2}
T(X,Y)=\eta(Y)\phi X-\eta(X)\phi Y.
\end{equation}
The relation between Levi-Civita connection $\nabla$ and quarter symmetric metric connection $\overline{\nabla}$
of a contact metric manifold is given by
\begin{equation}
\label{1.3}
\overline{\nabla}_XY=\nabla_XY-\eta(X)\phi Y.
\end{equation}
\indent Quarter symmetric connection also studied by Ali and Nivas \cite{ALNI}, Anitha and Bagewadi (\cite{ANIBA} - \cite{ANIBA2}), De and Uddin \cite{DEUD}, Hui \cite{HUI2}, Mishra and Pandey \cite{MISHRA}, Mukhopadhya et al. \cite{MRB}, Prakasha \cite{PRAKAS}, Rastogi \cite{RASTOGI}, Siddesha and Bagewadi \cite{SIBA}, Yano and Imai \cite{YANOI} and many others.\\
\indent In particular if $\phi X=X$ and $\phi Y=Y$, then quarter symmetric reduces to a semi-symmetric connection \cite{FRSC}.
The semi-symmetric connection is the generalized case of quarter symmetric metric connection
and it is important in the geometry of Riemannian manifolds.\\
\indent In 2003, Shaikh \cite{SHAIKH2} introduced the notion of Lorentzian
concircular structure manifolds (briefly, $(LCS)_n$-manifolds),
with an example, which generalizes the notion of LP-Sasakian
manifolds introduced by Matsumoto \cite{8} and also by Mihai and
Rosca \cite{9}. Then Shaikh and Baishya (\cite{SHAIKH3}, \cite{SHAIKH4})
investigated the applications of $(LCS)_n$-manifolds to the general
theory of relativity and cosmology. The $(LCS)_n$-manifolds is also
studied by Hui \cite{SKH}, Hui and Atceken \cite{HUI1}, Prakasha \cite{PRAKAS2}, Shaikh and his
co-authors (\cite{SHAIKH1} - \cite{SHAIKH9}) and many others.\\
\indent The present paper deals with the study of invariant submanifolds of $(LCS)_{n}$-manifolds with respect
to quarter symmetric metric connection.
Section 2 is concerned with some preliminaries which will be used in the sequel. $(LCS)_{n}$-manifolds with
respect to quarter symmetric metric connection
is studied in section 3. In section 4, we study invariant submanifolds of $(LCS)_n$-manifolds with respect to
quarter symmetric metric connection. It is proved that the mean curvature of an invariant submanifold
with respect to quarter symmetric metric connection and Levi-Civita connections are equal. In this section,
we construct an example of such notion to illustrate the result. Section 5 consists with the study of
recurrent invariant submanifolds of $(LCS)_n$-manifold with respect to quarter symmetric
metric connection. We obtain a necessary and sufficient condition of the second fundamental form of an
invariant submanifold of a $(LCS)_n$-manifold with respect
to quarter symmetric metric connection to be recurrent, $2$-recurrent and generalized $2$-recurrent.
Some equivalent conditions of invariant submanifold of
$(LCS)_n$-manifolds with respect to quarter symmetric metric connection are obtained in this section.
\section{Preliminaries}
The covariant differential of the $p^{th}$ order, $p\geq 1$, of a $(0,k)$-tensor field $T$, $k\geq 1$,
defined on a Riemannian manifold $(\widetilde{ M},g)$ with the Levi-Civita connection $\nabla$ is denoted
by $\nabla^pT$.  According to \cite{ROTER} the tensor $T$ is said to be recurrent, respectively 2-recurrent, if
the following condition holds on $M$
\begin{eqnarray}
\label{2.1}(\nabla T)(X_1,X_2,\cdots,X_k ; X)T(Y_1,Y_2,\cdots,Y_k) &=&\\
\nonumber (\nabla T)(Y_1,Y_2,\cdots,Y_k;X)T(X_1,X_2,\cdots,X_k),
\end{eqnarray}
respectively
\begin{eqnarray}
\label{2.2}(\nabla^2 T)(X_1,X_2,\cdots,X_k ; X,Y)T(Y_1,Y_2,\cdots,Y_k) &=&\\
\nonumber (\nabla^2 T)(Y_1,Y_2,\cdots,Y_k;X,Y)T(X_1,X_2,\cdots,X_k),
\end{eqnarray}
where $ X,Y,X_1,Y_1,\cdots,X_k, Y_k \in T \widetilde{M}$. From (\ref{2.1}) it follows that at a point
$x$ $\in \widetilde{M}$ if the tensor $T$ is non-zero then there exists a unique 1-form $\pi$
respectively, a (0,2) tensor $\psi$ , defined on a neighbourhood $U$ of $x$, such that
\begin{equation}\label{2.3}
\nabla T= T\otimes \pi, \ \ \pi = d(\log \|T\|)
\end{equation}
respectively,
\begin{equation}\label{2.4}
\nabla^2T=T\otimes\psi,
\end{equation}
holds on $U$, where $\|T\|$ denotes the norm of $T$ and $\|T\|^2=g(T,T)$.\\
The tensor is said to be generalized 2-recurrent if
\begin{eqnarray*}
(\nabla^2T)(X_1,X_2,\cdots,X_k;X,Y)-(\nabla T\otimes\pi)(X_1, X_2,\cdots,X_k;X,Y)T(Y_1, Y_2,\cdots,Y_k) =\\
(\nabla^2T)(Y_1, Y_2,\cdots,Y_k;X,Y)-(\nabla T\otimes\pi)(Y_1, Y_2,\cdots,Y_k;X,Y)T(X_1,X_2,.....,X_k)
\end{eqnarray*}
holds on $\widetilde{M}$, where $\phi $ is a 1-form on $\widetilde{M}$. From this it follows that
at a point $x\in \widetilde{M}$ if the tensor $T$ is non-zero, then there exists a unique $(0,2)$-tensor
$\psi$, defined on a neighbourhood $U$ of $x$, such that
\begin{equation}\label{2.5}
\nabla^2T=\nabla T\otimes\pi+T\otimes\psi
\end{equation}
holds on $U$.
\par An $n$-dimensional Lorentzian manifold $\widetilde{M}$ is a smooth connected
paracompact Hausdorff manifold with a Lorentzian metric $g$, that
is, $\widetilde{M}$ admits a  smooth symmetric tensor field $g$ of type (0,2)
such that for each point $p\in \widetilde{M}$, the tensor $g_{p}:T_{p}\widetilde{M}\times
T_{p}\widetilde{M}$ $\rightarrow\mathbb{R}$ is a non-degenerate inner product of
signature $(-,+,\cdots,+)$, where $T_{p}\widetilde{M}$ denotes the tangent
vector space of $\widetilde{M}$ at $p$ and $\mathbb{R}$ is the real number
space. A non-zero vector $v$ $\in T_{p}\widetilde{M}$ is said to be timelike
(resp., non-spacelike, null, spacelike) if it satisfies $g_{p}(v,v)
< 0$ (resp, $\leq $ 0, = 0, $> 0$) \cite{NIL}.
\begin{definition}
In a Lorentzian manifold $(\widetilde{M},g)$ a vector field $P$ defined by
\begin{equation*}
g(X,P)=A(X)
\end{equation*}
for any $X\in\Gamma(T\widetilde{M})$, is said to be a concircular vector field \cite{15} if
\begin{equation*}
(\widetilde{\nabla}_{X}A)(Y)=\alpha \{g(X,Y)+\omega(X)A(Y)\},
\end{equation*}
where $\alpha$ is a non-zero scalar and $\omega$ is a closed 1-form
and $\widetilde{\nabla}$ denotes the operator of covariant
differentiation with respect to the Lorentzian metric $g$.
\end{definition}
Let $\widetilde{M}$ be an $n$-dimensional Lorentzian manifold admitting a unit
timelike concircular vector field $\xi$, called the characteristic
vector field of the manifold. Then we have
\begin{equation}
\label{3.1}
g(\xi, \xi)=-1.
\end{equation}
Since $\xi$ is a unit concircular vector field, it follows that
there exists a non-zero 1-form $\eta$ such that for
\begin{equation}
\label{3.2}
g(X,\xi)=\eta(X),
\end{equation}
the equation of the following form holds
\begin{equation}
\label{3.3}
(\widetilde\nabla _{X}\eta)(Y)=\alpha \{g(X,Y)+\eta(X)\eta(Y)\},
\ \ \ (\alpha\neq 0)
\end{equation}
\begin{equation}
\label{3.4}
\widetilde\nabla _{X}\xi = \alpha \{X +\eta(X)\xi\}, \ \ \ \alpha\neq 0,
\end{equation}
for all vector fields $X$, $Y$, where $\widetilde{\nabla}$ denotes the
operator of covariant differentiation with respect to the Lorentzian
metric $g$ and $\alpha$ is a non-zero scalar function satisfies
\begin{equation}
\label{3.5}
{\widetilde\nabla}_{X}\alpha = (X\alpha) = d\alpha(X) = \rho\eta(X),
\end{equation}
$\rho$ being a certain scalar function given by $\rho=-(\xi\alpha)$.
Let us take
\begin{equation}
\label{3.6}
\phi X=\frac{1}{\alpha}\widetilde\nabla_{X}\xi,
\end{equation}
then from (\ref{3.4}) and (\ref{3.6}) we have
\begin{equation}
\label{3.7} \phi X = X+\eta(X)\xi,
\end{equation}
\begin{equation}
\label{3.8}
g(\phi X,Y) = g(X,\phi Y),
\end{equation}
from which it follows that $\phi$ is a symmetric (1,1) tensor and
called the structure tensor of the manifold. Thus the Lorentzian
manifold $\widetilde{M}$ together with the unit timelike concircular vector
field $\xi$, its associated 1-form $\eta$ and an (1,1) tensor field
$\phi$ is said to be a Lorentzian concircular structure manifold
(briefly, $(LCS)_{n}$-manifold), \cite{SHAIKH2}. Especially, if we take
$\alpha=1$, then we can obtain the LP-Sasakian structure of
Matsumoto \cite{8}. In a $(LCS)_{n}$-manifold $(n>2)$, the following
relations hold \cite{SHAIKH2}:
\begin{equation}
\label{3.9}
\eta(\xi)=-1,\ \ \phi \xi=0,\ \ \ \eta(\phi X)=0,\ \ \
g(\phi X, \phi Y)= g(X,Y)+\eta(X)\eta(Y),
\end{equation}
\begin{equation}
\label{3.10}
\phi^2 X= X+\eta(X)\xi,
\end{equation}
\begin{equation}
\label{3.11}
\widetilde{S}(X,\xi)=(n-1)(\alpha^{2}-\rho)\eta(X),
\end{equation}
\begin{equation}
\label{3.12}
\widetilde{R}(X,Y)\xi=(\alpha^{2}-\rho)[\eta(Y)X-\eta(X)Y],
\end{equation}
\begin{equation}
\label{3.13}
\widetilde{R}(\xi,Y)Z=(\alpha^{2}-\rho)[g(Y,Z)\xi-\eta(Z)Y],
\end{equation}
\begin{equation}
\label{3.14}
(\widetilde{\nabla}_{X}\phi)Y=\alpha\{g(X,Y)\xi+2\eta(X)\eta(Y)\xi+\eta(Y)X\},
\end{equation}
\begin{equation}
\label{3.15}
(X\rho)=d\rho(X)=\beta\eta(X),
\end{equation}
\begin{equation}
\label{3.16}
\widetilde{R}(X,Y)Z =\phi \widetilde{R}(X,Y)Z +(\alpha^{2}-\rho)\{g(Y,Z)\eta(X)-g(X,Z)\eta(Y)\}\xi
\end{equation}
for all $X,\ Y,\ Z\in\Gamma(T\widetilde{M})$ and $\beta = -(\xi\rho)$ is a scalar function,
where $\widetilde{R}$ is the curvature tensor and $\widetilde{S}$ is the Ricci tensor of the manifold.\\
\indent Let $M$ be a submanifold of dimension $m$ of a $(LCS)_n$-manifold $\widetilde{M}$ $(m<n)$ with induced
metric $g$. Also let $\nabla$ and $\nabla^{\perp}$ be the induced
connection on the tangent bundle $TM$ and the normal bundle
$T^{\perp}M$ of $M$ respectively. Then the Gauss and Weingarten
formulae are given by
\begin{equation}\label{3.17}
\widetilde{\nabla}_{X}Y = \nabla_{X}Y + \sigma(X,Y)
\end{equation}
and
\begin{equation}\label{3.18}
\widetilde{\nabla}_{X}V = -A_{V}X + \nabla^{\perp}_{X}V
\end{equation}
for all $X,\ Y \in\Gamma(TM)$ and $V\in\Gamma(T^{\perp}M)$, where $\sigma$
and $A_V$ are second fundamental form and the shape operator
(corresponding to the normal vector field $V$) respectively for the
immersion of $M$ into $\widetilde{M}$. The second fundamental form $\sigma$ and the
shape operator $A_V$ are related by \cite{YANO3}
\begin{equation}\label{3.19}
g(\sigma(X,Y),V) = g(A_{V}X,Y),
\end{equation}
for any $X,\ Y \in\Gamma(TM)$ and $V\in\Gamma(T^{\perp}M)$. We note that $\sigma(X,Y)$ is bilinear and since
$\nabla_{fX}Y = f\nabla_{X}Y$ for any smooth function $f$ on a manifold, we have
\begin{equation}\label{3.20}
\sigma(fX,Y) = f\sigma(X,Y).
\end{equation}
For the second fundamental form $\sigma$, the first and second covariant derivatives of $\sigma$ are defined by
\begin{equation}\label{3.21}
(\widetilde{\nabla}_{X}\sigma)(Y,Z) = \nabla_{X}^\perp (\sigma(Y,Z)) - \sigma(\nabla_{X}Y,Z) - \sigma(Y,\nabla_{X}Z)
\end{equation}
and
\begin{eqnarray}
\label{3.22}(\widetilde{\nabla}^2\sigma)(Z,W,X,Y)&=&(\widetilde{\nabla}_X\widetilde{\nabla}_Y\sigma)(Z,W)\\
\nonumber&=&\nabla_X^\bot((\widetilde{\nabla}_Y\sigma)(Z,W)-(\widetilde{\nabla}_Y\sigma)(\nabla_XZ,W)\\
\nonumber&&-(\widetilde{\nabla}_X\sigma)(Z,\nabla_YW)-(\widetilde{\nabla}_{\nabla_XY}\sigma)(Z,W)
\end{eqnarray}
for any vector fields $X$, $Y$, $Z$ tangent to $M$. Then $\widetilde{\nabla}\sigma$ is a normal bundle valued tensor
of type (0,3) and is called the third  fundamental form of $M$, $\widetilde{\nabla}$ is called the Vander-Waerden-Bortolotti connection
of $\widetilde{M}$, i.e. $\widetilde{\nabla}$ is the connection in $TM\oplus T^\perp M$ built with $\nabla$ and $\nabla^\perp$.
If $\widetilde{\nabla}\sigma = 0$, then $M$ is said to have parallel second fundamental form \cite{YANO3}.\\
\indent The mean curvature vector $H$ on $M$ is given by
\begin{equation*}
H=\frac{1}{m}\displaystyle\sum_{i=1}^{m}\sigma(e_i,e_i)
\end{equation*}
where $\{e_1,e_2,\cdots ,e_m\}$ is a local orthonormal frame of vector fields on $M$.\\
\noindent A submanifold $M$ of a $(LCS)_n$-manifold $\widetilde{M}$ is said to be totally umbilical if
\begin{equation}\label{3.28}
\sigma(X,Y)=g(X,Y)H,
\end{equation}
for any vector fields $X$, $Y$ $\in$ $TM$. Moreover if $\sigma(X,Y)=0$ for all $X,\
Y\ \in TM$, then $M$ is said to be totally geodesic and if $H=0$ then $M$ is minimal in $\widetilde{M}$.
\indent For a $(0,l)$ tensor field $T$, $l\geq 1$, and a symmetric $(0,2)$ tensor field $B$, we have
\begin{eqnarray}\label{3.23}
&&Q(B,T)(X_1,\cdots,X_l;X,Y)\\
\nonumber&&= -T((X\wedge_{B}Y)X_1,X_2,\cdots,X_l)-\cdots - T(X_1,\cdots,X_{l-1},(X\wedge_{B}Y)X_{K}),
\end{eqnarray}
where
\begin{equation}\label{3.24}
(X\wedge_{B}Y)Z = B(Y,Z)X - B(X,Z)Y.
\end{equation}
Putting $T = \sigma$ and $B = g$ or $B = S$, we obtain $Q(g,\sigma)$ and $Q(S,\sigma)$ respectively.\\
\indent An immersion is said to be pseudo parallel if
\begin{equation}\label{3.25}
\widetilde{R}(X,Y)\cdot \sigma = (\widetilde{\nabla}_X\widetilde{\nabla}_Y-\widetilde{\nabla}_Y\widetilde{\nabla}_X-\widetilde{\nabla}_{[X,Y]})\sigma=L_1 Q(g,\sigma)
\end{equation}
for all vector fields $X$, $Y$ tangent to $M$ \cite{DESZCZ}. In particular, if $L_1 = 0$ then the manifold is said to be semiparallel.
Again the submanifold $M$ of a $(LCS)_n$-manifold $\widetilde{M}$ is said to be Ricci generalized pseudoparallel \cite{DESZCZ} if its second fundamental form $\sigma$ satisfies
\begin{equation}\label{3.26}
\widetilde{R}(X,Y)\cdot \sigma=L_2 Q(S,\sigma).
\end{equation}
Also the second fundamental form $\sigma$ of submanifold $M$ of a $(LCS)_n$-manifold $\widetilde{M}$ is said to be $\eta$-parallel \cite{YANO3} if
\begin{equation}\label{3.27}
(\nabla_X\sigma)(\phi Y,\phi Z)=0
\end{equation}
for all vector fields $X$, $Y$ and $Z$ tangent to $M$.

\begin{definition}
\cite{bejancu} A submanifold $M$ of a $(LCS)_n$-manifold $\widetilde{M}$ is said to be invariant if
the structure vector field $\xi$ is tangent to $M$ at every point of $M$ and $\phi X$ is tangent to $M$ for any
vector field $X$ tangent to $M$ at every point of $M$, that is $\phi(TM)\subset TM$ at every point of $M$.
\end{definition}
From the Gauss and Weingarten formulae we obtain
\begin{equation}\label{3.29}
\widetilde{R}(X,Y)Z = R(X,Y)Z + A_{\sigma(X,Z)}Y - A_{\sigma(Y,Z)}X,
\end{equation}
where $\widetilde{R}(X,Y)Z$ denotes the tangential part of the curvature tensor of the submanifold.\\
Now we have from tensor algebra
\begin{equation}\label{3.30}
(\widetilde{R}(X,Y)\cdot \sigma)(Z,U) = R^\perp(X,Y)\sigma(Z,U) - \sigma(R(X,Y)Z,U) - \sigma(Z,R(X,Y)U)
\end{equation}
for all vector fields $X$, $Y$, $Z$ and $U$, where
\begin{equation*}
R^\perp(X,Y) = [\nabla_{X}^\perp,\nabla_{Y}^\perp] - \nabla_{[X,Y]}^\perp.
\end{equation*}
In an invariant submanifold $M$ of a $(LCS)_n$-manifold $\widetilde{M}$, the following relations hold \cite{SHAIKH9}:
\begin{equation}\label{3.31}
\nabla_{X}\xi = \alpha \phi X,
\end{equation}
\begin{equation}\label{3.32}
\sigma(X,\xi) = 0,
\end{equation}
\begin{equation}\label{3.33}
R(X,Y)\xi=(\alpha^{2}-\rho)[\eta(Y)X-\eta(X)Y],
\end{equation}
\begin{equation}\label{3.34}
S(X,\xi)=(n-1)(\alpha^{2}-\rho)\eta(X), \ \ \text{i.e., } Q\xi = (n-1)(\alpha^{2}-\rho)\xi,
\end{equation}
\begin{equation}\label{3.35}
(\nabla_{X}\phi)(Y)=\alpha \{g(X,Y)\xi+2\eta(X)\eta(Y)\xi+\eta(Y)X\},
\end{equation}
\begin{equation}\label{3.36}
\sigma(X,\phi Y)= \phi \sigma(X,Y) =\sigma(\phi X, Y) = \sigma(X,Y)=\sigma(\phi X,\phi Y).
\end{equation}
\section{$(LCS)_n$-manifold with respect to quarter symmetric metric connection}
Let $\overline{\widetilde{\nabla}}$ be a linear connection and $\widetilde{\nabla}$ be the Levi-Civita connection of a $(LCS)_n$-manifold
$\widetilde{M}$ such that
\begin{equation}\label{4.1}
\overline{\widetilde{\nabla}}_XY=\widetilde{\nabla}_XY+U(X,Y),
\end{equation}
where $U$ is a (1,1) type tensor and $X,\ Y\in \Gamma(T\widetilde{M})$.
For $\overline{\widetilde{\nabla}}$ to be a quarter symmetric metric connection on $\widetilde{M}$, we have
\begin{equation}\label{4.2}
U(X,Y)=\frac{1}{2}[T(X,Y)+T^\prime(X,Y)+ T^\prime(Y,X)],
\end{equation}
where
\begin{equation}\label{4.3}
g(T^\prime(X,Y),Z)=g(T(Z,X),Y).
\end{equation}
From (\ref{1.2}) and (\ref{4.3}) we get
\begin{equation}\label{4.4}
T^\prime(X,Y)=\eta(X)\phi Y-g(Y,\phi X)\xi.
\end{equation}
So,
\begin{equation}\label{4.5}
U(X,Y)=\eta(Y)\phi X-g(Y,\phi X)\xi.
\end{equation}
Therefore a quarter symmetric metric connection $\overline{\widetilde{\nabla}}$ in a $(LCS)_n$-manifold $\widetilde{M}$ is given by
\begin{equation}\label{4.6}
\overline{\widetilde{\nabla}}_XY=\widetilde{\nabla}_XY+ \eta(Y)\phi X-g(\phi X,Y)\xi.
\end{equation}
\par Let $\overline{\widetilde{R}}$ and $\widetilde{R}$ be the curvature tensors of a $(LCS)_n$-manifold $\widetilde{M}$ with respect to the quarter symmetric metric connection $\overline{\widetilde{\nabla}}$ and the Levi-Civita connection $\widetilde{\nabla}$ respectively. Then we have
\begin{eqnarray}\label{4.7}
\overline{\widetilde{R}}(X,Y)Z&=& \widetilde{R}(X,Y)Z+(2\alpha-1)\left[g(\phi X,Z)\phi Y-g(\phi Y,Z)\phi X \right]  \\
 \nonumber &&+\alpha\left[\eta(Y)X-\eta(X)Y\right]\eta(Z) \\
\nonumber &&  +\alpha \left[ g(Y,Z)\eta(X)-g(X,Z)\eta(Y)\right]\xi,
\end{eqnarray}
where $\overline{\widetilde{R}}(X,Y)Z=\overline{\widetilde{\nabla}}_X \overline{\widetilde{\nabla}}_Y Z-\overline{\widetilde{\nabla}}_Y \overline{\widetilde{\nabla}}_XZ-\overline{\widetilde{\nabla}}_{[X,Y]}Z$
and $X,\ Y,\ Z\in \chi(\widetilde{M})$.
\par By suitable contraction we have from (\ref{4.7}) that
\begin{eqnarray}\label{4.8}
  \overline{\widetilde{S}}(Y,Z) &=& \widetilde{S}(Y,Z)+(\alpha-1)g(Y,Z)+(n\alpha-1)\eta(Y)\eta(Z) \\
  \nonumber&&-(2\alpha-1)a g(\phi Y,Z),
\end{eqnarray}
where $\overline{\widetilde{S}}$ and $\widetilde{S}$ are the Ricci tensors of $\widetilde{M}$ with respect to $\overline{\widetilde{\nabla}}$ and $\widetilde{\nabla}$ respectively and $a=trace\phi$.
Also we have
\begin{eqnarray}\label{4.9}
  \overline{\widetilde{r}} &=& \widetilde{r}-(2\alpha-1)a^2-(n-1),
\end{eqnarray}
where $\overline{\widetilde{r}}$ and $\widetilde{r}$ are the scalar curvature of $\widetilde{M}$ with respect to $\overline{\widetilde{\nabla}}$ and $\widetilde{\nabla}$ respectively. The relation (\ref{4.8}) can be written as
\begin{eqnarray}\label{4.10}
  \overline{\widetilde{Q}}Y &=& \widetilde{Q} Y+(\alpha-1)Y+(n\alpha-1)\eta(Y)\xi -(2\alpha-1)a \phi Y,
\end{eqnarray}
where $\overline{\widetilde{Q}}$ and $\widetilde{Q}$ are the Ricci operator of $\widetilde{M}$ with respect to the connections $\overline{\widetilde{\nabla}}$ and $\widetilde{\nabla}$ respectively such that
$g(\overline{\widetilde{Q}} X, Y)=\overline{\widetilde{S}}(X,Y)$ and $g(\widetilde{Q} X, Y)=\widetilde{S}(X, Y)$ for all $X, Y\in (\chi{M})$.
Hence we get
\begin{eqnarray}
\label{4.11} \overline{\widetilde{R}}(X,Y)\xi &=& \widetilde{R}(X,Y)\xi -\alpha[\eta(Y)X-\eta(X)Y]\\
\nonumber &=& (\alpha^2-\alpha-\rho) \left[\eta(Y)X-\eta(X)Y\right],\\
\label{4.12} \overline{\widetilde{R}}(X,\xi)Y &=& (\alpha^2-\alpha-\rho) \left[\eta(Y)X-g(X,Y)\xi\right] \\
\nonumber&=&-\overline{\widetilde{R}}(\xi,X)Y,\\
\label{4.13}  \overline{\widetilde{S}}(Y,\xi) &=&(n-1)(\alpha^2-\alpha-\rho)\eta(Y),\\
\label{4.14}  \overline{\widetilde{Q}}\xi &=& (n-1)(\alpha^2-\alpha-\rho)\xi.
\end{eqnarray}
\begin{theorem}
In a $(LCS)_n$-manifold $\widetilde{M}$ with respect to   quarter symmetric metric connection $\overline{\widetilde{\nabla}}$ we have
\begin{eqnarray*}
\overline{\widetilde{R}}(X,Y)Z+\overline{\widetilde{R}}(Y,Z)X+\overline{\widetilde{R}}(Z,X)Y &=& 0,\\
\overline{\widetilde{R}}(X,Y,Z,U)+\overline{\widetilde{R}}(Y,X,Z,U) &=& 0,\\
\overline{\widetilde{R}}(X,Y,Z,U)+\overline{\widetilde{R}}(X,Y,U,Z) &=& 0,\\
\overline{\widetilde{R}}(X,Y,Z,U)-\overline{\widetilde{R}}(Z,U,X,Y) &=& 0.
\end{eqnarray*}
\end{theorem}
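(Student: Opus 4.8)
The plan is to reduce the four relations to the corresponding classical identities for the Levi-Civita curvature tensor $\widetilde{R}$, which are already known to hold. Writing (\ref{4.7}) in the lowered $(0,4)$ form $\overline{\widetilde{R}}(X,Y,Z,U)=g(\overline{\widetilde{R}}(X,Y)Z,U)$, we obtain $\overline{\widetilde{R}}=\widetilde{R}+P$, where $P$ collects the three bracketed correction terms of (\ref{4.7}). Since $\widetilde{R}$ is the Riemann tensor of a Levi-Civita connection it automatically obeys all four identities, so it suffices to verify that the correction tensor $P$ satisfies the first Bianchi identity, skew-symmetry in each pair of arguments, and the pair-interchange symmetry.

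First I would lower the index in $P$ using (\ref{3.2}) and the symmetry (\ref{3.8}), and set $h(X,Y):=g(\phi X,Y)$, which is a symmetric $(0,2)$-tensor by (\ref{3.8}). A direct computation gives
\[
g(P(X,Y)Z,U)=(2\alpha-1)\bigl[h(X,Z)h(Y,U)-h(Y,Z)h(X,U)\bigr]+\alpha\bigl\{[\eta(Y)g(X,U)-\eta(X)g(Y,U)]\eta(Z)+[g(Y,Z)\eta(X)-g(X,Z)\eta(Y)]\eta(U)\bigr\}.
\]
The structural observation is that each block is a Kulkarni--Nomizu product of symmetric $(0,2)$-tensors: the first block is $\tfrac12(2\alpha-1)$ times the Kulkarni--Nomizu product of $h$ with itself, and the second block is $-\alpha$ times the Kulkarni--Nomizu product of $g$ with $\eta\otimes\eta$. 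Since any such product of symmetric tensors is an algebraic curvature tensor, it satisfies the first Bianchi identity, both skew-symmetries, and the pair-interchange symmetry; hence so does their sum $P$, and therefore so does $\overline{\widetilde{R}}=\widetilde{R}+P$.

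If a self-contained verification is preferred, I would instead check the identities for $P$ directly and in order. Skew-symmetry in the first pair, $P(X,Y)Z=-P(Y,X)Z$, is immediate by inspection of (\ref{4.7}). Skew-symmetry in the second pair, $g(P(X,Y)Z,U)=-g(P(X,Y)U,Z)$, follows by interchanging $Z$ and $U$ in the displayed expression and cancelling terms using the symmetry of $g$. The first Bianchi identity is obtained by forming the cyclic sum over $X,Y,Z$ in each of the three blocks of (\ref{4.7}) separately and using $g(\phi X,Y)=g(\phi Y,X)$ (from (\ref{3.8})) together with the symmetry of $g$; each block then vanishes on its own. Finally, the pair-interchange symmetry $\overline{\widetilde{R}}(X,Y,Z,U)=\overline{\widetilde{R}}(Z,U,X,Y)$ need not be checked independently, as it is a purely algebraic consequence of the first Bianchi identity together with the two skew-symmetries.

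The computations are entirely routine, and the only point that requires care---and the sole place where the $(LCS)_n$-structure really enters---is the symmetry of $h(X,Y)=g(\phi X,Y)$ guaranteed by (\ref{3.8}). This is precisely what makes the $(2\alpha-1)$-block a genuine algebraic curvature tensor; without it the first Bianchi identity would fail for that block. I expect the cyclic bookkeeping in the Bianchi identity to be the most tedious step, but it poses no genuine obstacle.
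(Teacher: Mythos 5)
Your argument is correct, and in fact the paper offers no proof of this theorem at all --- it is stated bare after (\ref{4.7})--(\ref{4.14}) --- so there is nothing to compare against except the implicit intention that it "follows from (\ref{4.7})". Your route makes that implicit step rigorous and does so cleanly: lowering an index in the correction term $P=\overline{\widetilde{R}}-\widetilde{R}$ of (\ref{4.7}) and recognizing the two blocks as (multiples of) Kulkarni--Nomizu products of the symmetric tensors $h(X,Y)=g(\phi X,Y)$ with itself and of $g$ with $\eta\otimes\eta$ immediately yields all four identities for $P$, hence for $\overline{\widetilde{R}}$; I checked the index-lowering, the identification of the two blocks, the cyclic cancellation, and the reduction of pair-interchange to the other three identities, and all are sound. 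Two remarks. First, your emphasis on the symmetry of $h$ via (\ref{3.8}) is well placed: this is exactly what is special about the $(LCS)_n$ structure here, and the first Bianchi identity is \emph{not} automatic for a connection with torsion (in general the cyclic sum equals a cyclic sum of torsion terms), so the verification you carry out is genuinely necessary and not a formality. Second, your proof is conditional on the paper's formula (\ref{4.7}); that is the natural and legitimate starting point within the paper, but it is worth being aware that you are inheriting whatever is in that displayed equation rather than recomputing $\overline{\widetilde{R}}$ from (\ref{4.6}).
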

\section{Invariant submanifolds of $(LCS)_n$-Manifolds with respect to quarter symmetric metric connection}
Let $M$ be an invariant submanifold of a $(LCS)_n$-manifolod $\widetilde{M}$ with the Levi-Civita connection $\widetilde{\nabla}$ and quarter symmetric metric  connection $\overline{\widetilde{\nabla}}$. Let $\nabla$ be the induced connection on $M$ from the connection $\widetilde{\nabla}$ and $\overline{\nabla}$ be the induced connection on $M$ from the connection $\overline{\widetilde{\nabla}}$.\par Let $\sigma$ and $\overline{\sigma}$ be second fundamental form with respect to Levi-Civita connection and quarter symmetric metric connection respectively.\par Then we have
\begin{eqnarray}
\label{5.2}\overline{\widetilde{\nabla}}_XY &=& \overline{\nabla}_XY+\overline{\sigma}(X,Y).
\end{eqnarray}
From (\ref{4.6}) and (\ref{5.2}) we get
\begin{eqnarray}\label{5.3a}
\overline{\nabla}_XY +\overline{\sigma}(X,Y)&=& \widetilde{\nabla}_XY+\eta(Y)\phi X-g(\phi X,Y)\xi \\
&=& \nabla_XY+\sigma(X,Y)+\eta(Y)\phi X-g(\phi X,Y)\xi.
\end{eqnarray}
Since $M$ is invariant so $\phi X$, \ $\xi \in TM$ and therefore equating tangential and normal parts we get
\begin{eqnarray}
\label{5.3}  \overline{\nabla}_XY &=& \nabla_XY +\eta(Y)\phi X-g(\phi X,Y)\xi,\\
\label{5.4} \overline{\sigma}(X,Y) &=& \sigma(X,Y).
\end{eqnarray}
i.e., the second fundamental form with respect to $\overline{\widetilde{\nabla}}$ and $\widetilde{\nabla}$ are same.\\
Also from (\ref{4.6}) and (\ref{5.3}) we can say that the invariant submanifold $M$ admits quarter symmetric metric connection.\\
Hence we get the following:
\begin{theorem}
Let $M$ be an invariant submanifold of a $(LCS)_n$-manifolod $\widetilde{M}$ with the Levi-Civita connection
$\widetilde{\nabla}$ and quarter symmetric metric connection $\overline{\widetilde{\nabla}}$ and $\nabla$ be the
induced connection on $M$ from the connection $\widetilde{\nabla}$ and $\overline{\nabla}$ be the induced
connection on $M$ from the connection $\overline{\widetilde{\nabla}}$. If $\sigma$ and $\overline{\sigma}$ are the
second fundamental forms with respect to the Levi-Civita connection and quarter symmetric metric connection
respectively then\\
\emph{(i)} $M$ admits quarter symmetric metric connection.\\
\emph{(ii)} The second fundamental forms with respect to $\widetilde{\nabla}$ and $\overline{\widetilde{\nabla}}$ are equal.
\end{theorem}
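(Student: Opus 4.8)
The plan is to compare the two Gauss decompositions of the ambient connections and then exploit the invariance of $M$ to separate tangential from normal components. First I would write the Gauss formula (\ref{3.17}) for the Levi-Civita connection, $\widetilde{\nabla}_XY = \nabla_XY + \sigma(X,Y)$, together with the analogous decomposition (\ref{5.2}) for the quarter symmetric metric connection, $\overline{\widetilde{\nabla}}_XY = \overline{\nabla}_XY + \overline{\sigma}(X,Y)$, both valid for all $X,Y \in \Gamma(TM)$.

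Next I would substitute these two decompositions into the defining relation (\ref{4.6}) of the quarter symmetric metric connection on $\widetilde{M}$. This produces the single identity
\[
\overline{\nabla}_XY + \overline{\sigma}(X,Y) = \nabla_XY + \sigma(X,Y) + \eta(Y)\phi X - g(\phi X,Y)\xi,
\]
in which the left-hand side is already split into its tangential part $\overline{\nabla}_XY$ and its normal part $\overline{\sigma}(X,Y)$.

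The crucial point --- and really the only substantive step --- is the observation that, because $M$ is invariant, both $\phi X$ and $\xi$ are tangent to $M$ for every $X \in \Gamma(TM)$. Consequently the correction term $\eta(Y)\phi X - g(\phi X,Y)\xi$ lies entirely in $TM$ and contributes nothing to the normal bundle. Equating the tangential and normal parts of the displayed identity then yields at once (\ref{5.3}), $\overline{\nabla}_XY = \nabla_XY + \eta(Y)\phi X - g(\phi X,Y)\xi$, and (\ref{5.4}), $\overline{\sigma}(X,Y) = \sigma(X,Y)$; the latter is precisely assertion (ii).

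For assertion (i), I would note that (\ref{5.3}) has exactly the structural form of (\ref{4.6}) with the ambient tensor $U(X,Y)=\eta(Y)\phi X - g(\phi X,Y)\xi$ replaced by its restriction to $M$; since $\xi$, $\eta$, $\phi$ and $g$ all restrict to $M$ under invariance, this is the formula defining a quarter symmetric metric connection on $M$, so $\overline{\nabla}$ is such a connection. I expect no genuine obstacle beyond the invariance observation that keeps the correction tensor tangent; everything else is a routine splitting into tangential and normal components.
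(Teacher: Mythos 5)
Your proposal is correct and follows essentially the same route as the paper: both substitute the Gauss decompositions for $\widetilde{\nabla}$ and $\overline{\widetilde{\nabla}}$ into the ambient relation (\ref{4.6}), invoke invariance to conclude that the correction term $\eta(Y)\phi X - g(\phi X,Y)\xi$ is tangential, and then equate tangential and normal parts to obtain (\ref{5.3}) and (\ref{5.4}). Your justification of part (i) --- that (\ref{5.3}) reproduces the defining formula of a quarter symmetric metric connection on $M$ because $\xi$, $\eta$, $\phi$, $g$ restrict to $M$ --- is exactly the (brief) argument the paper gives.
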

We now prove the following:
\begin{theorem}
The mean curvature of an invariant submanifold $M$ of a $(LCS)_n$-manifold $\widetilde{M}$ remains invariant under quarter symmetric metric connection.
\end{theorem}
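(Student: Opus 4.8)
The plan is to reduce the statement to the already established equality of second fundamental forms, so that no further geometric input is needed. Since the mean curvature vector is obtained from the second fundamental form simply by tracing over a local orthonormal frame, and since Theorem 4.2, in the form of equation (\ref{5.4}), guarantees that $\overline{\sigma}=\sigma$ on an invariant submanifold, the desired invariance should drop out by a direct substitution.

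Concretely, I would first fix a local orthonormal frame $\{e_1,e_2,\ldots,e_m\}$ of vector fields on $M$ and write out the mean curvature vector $\overline{H}$ associated with the quarter symmetric metric connection $\overline{\widetilde{\nabla}}$ as
\[
\overline{H}=\frac{1}{m}\sum_{i=1}^{m}\overline{\sigma}(e_i,e_i),
\]
in exact parallel with the mean curvature vector
\[
H=\frac{1}{m}\sum_{i=1}^{m}\sigma(e_i,e_i)
\]
attached to the Levi-Civita connection $\widetilde{\nabla}$.

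Next, I would invoke (\ref{5.4}), which asserts $\overline{\sigma}(X,Y)=\sigma(X,Y)$ for all $X,Y\in\Gamma(TM)$. Applying this termwise with $X=Y=e_i$ gives $\overline{\sigma}(e_i,e_i)=\sigma(e_i,e_i)$ for each $i$; summing over $i$ and dividing by $m$ then yields $\overline{H}=H$, which is precisely the asserted invariance of the mean curvature under the quarter symmetric metric connection.

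I do not anticipate any genuine obstacle, because the essential work—showing that the normal components of the two connections agree on an invariant submanifold—was already carried out in passing from the decomposition (\ref{5.3a}) to its tangential and normal parts, namely (\ref{5.3}) and (\ref{5.4}). The only subtlety worth noting is that the mean curvature vector is an invariantly defined object, independent of the choice of orthonormal frame, so that the identity $\overline{H}=H$ is a statement about well-defined geometric quantities rather than about frame-dependent expressions; this independence is standard and can be taken for granted.
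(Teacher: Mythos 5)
Your proposal is correct and follows essentially the same route as the paper: the paper's proof likewise fixes an orthonormal frame, applies the equality $\overline{\sigma}=\sigma$ from (\ref{5.4}) termwise, and sums and divides by $m$. No gaps to report.
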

\begin{proof}
Let $\{e_1,e_2,\cdots,e_m\}$ be an orthonormal basis of $TM$. Therefore, from (\ref{5.4}) we get
\begin{equation*}
\overline{\sigma}(e_i,e_i)=\sigma(e_i,e_i).
\end{equation*}
Summing up for $i=1,2,\cdots,m$ and dividing by $m$ we get the desired result.
\end{proof}
\begin{corollary}
The invariant submanifold $M$ of $(LCS)_n$-manifold $\widetilde{M}$ with respect to quarter symmetric
metric connection is minimal with respect to the quarter symmetric metric  connection if and only if
it is minimal with respect to the Levi-Civita connection.
\end{corollary}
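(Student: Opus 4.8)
The plan is to deduce this equivalence directly from the mean-curvature invariance established in the preceding theorem, so that essentially no new computation is required. First I would fix notation: let $H$ and $\overline{H}$ denote the mean curvature vectors of $M$ associated with the Levi-Civita connection $\widetilde{\nabla}$ and the quarter symmetric metric connection $\overline{\widetilde{\nabla}}$ respectively. Thus, for a local orthonormal frame $\{e_1,\dots,e_m\}$ of $TM$, we have $H=\frac{1}{m}\sum_{i=1}^{m}\sigma(e_i,e_i)$ and $\overline{H}=\frac{1}{m}\sum_{i=1}^{m}\overline{\sigma}(e_i,e_i)$.

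The key input is equation (\ref{5.4}), namely $\overline{\sigma}(X,Y)=\sigma(X,Y)$, which was obtained by splitting (\ref{5.3a}) into its tangential and normal components, using that $\phi X$ and $\xi$ are tangent to the invariant submanifold $M$. Summing this identity over the orthonormal frame and dividing by $m$ yields $\overline{H}=H$, which is precisely the content of the preceding theorem. Hence the two mean curvature vectors coincide identically on $M$.

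With this in hand the corollary is immediate. By definition $M$ is minimal with respect to $\overline{\widetilde{\nabla}}$ precisely when $\overline{H}=0$, and minimal with respect to $\widetilde{\nabla}$ precisely when $H=0$. Since $\overline{H}=H$, the condition $\overline{H}=0$ holds if and only if $H=0$, which is exactly the asserted equivalence.

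I do not anticipate any genuine obstacle here: the entire content already resides in the equality $\overline{\sigma}=\sigma$ of the second fundamental forms, which follows from the invariance of $M$ (forcing $\phi X$ and $\xi$ to be tangent, so that the correction term $\eta(Y)\phi X-g(\phi X,Y)\xi$ appearing in (\ref{4.6}) contributes only to the tangential, and not the normal, part of $\overline{\widetilde{\nabla}}_XY$). Once that is granted, the corollary is a one-line consequence of the elementary fact that a vector vanishes if and only if an equal vector vanishes.
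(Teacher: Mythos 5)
Your proposal is correct and follows essentially the same route as the paper: the corollary is deduced directly from the equality $\overline{\sigma}=\sigma$ of equation (\ref{5.4}) and the resulting identity $\overline{H}=H$ of Theorem 4.3, so that minimality with respect to either connection is the single condition $H=0$. No further comment is needed.
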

\begin{corollary}
The invariant submanifold $M$ of $(LCS)_n$-manifold $\widetilde{M}$ is totally umbilical with respect to the quarter symmetric metric connection if and only if it is totally umbilical with respect to the Levi-Civita connection.
\end{corollary}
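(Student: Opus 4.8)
The plan is to reduce the statement to the identity of the two second fundamental forms already recorded in (\ref{5.4}) together with the invariance of the mean curvature vector from Theorem 4.2. Write $\overline{H}$ for the mean curvature of $M$ computed with respect to the quarter symmetric metric connection and $H$ for the one computed with respect to the Levi-Civita connection. By Theorem 4.2 (equivalently, by summing (\ref{5.4}) over a local orthonormal frame of $TM$ and dividing by $m$) these coincide, so $\overline{H}=H$.

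First I would recall that, by the definition in (\ref{3.28}), $M$ is totally umbilical with respect to the Levi-Civita connection precisely when $\sigma(X,Y)=g(X,Y)H$ for all $X,Y\in TM$, and totally umbilical with respect to the quarter symmetric metric connection precisely when $\overline{\sigma}(X,Y)=g(X,Y)\overline{H}$ for all $X,Y\in TM$. The same induced metric $g$ appears in both conditions, since $\overline{\widetilde{\nabla}}$ is a metric connection and the induced metric on $M$ is unchanged; this is what makes the two umbilicity conditions directly comparable.

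Next I would invoke (\ref{5.4}), namely $\overline{\sigma}(X,Y)=\sigma(X,Y)$, together with $\overline{H}=H$. Substituting these into the umbilicity condition for $\overline{\widetilde{\nabla}}$ shows that $\overline{\sigma}(X,Y)=g(X,Y)\overline{H}$ holds for all $X,Y\in TM$ if and only if $\sigma(X,Y)=g(X,Y)H$ holds for all $X,Y\in TM$. This yields both implications at once, completing the argument.

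The only genuine content is the observation that neither ingredient of the umbilicity condition changes when one passes between the two connections: both the defining tensor, via $\sigma=\overline{\sigma}$, and the mean curvature vector, via $H=\overline{H}$, are preserved. Consequently there is no real obstacle here; once (\ref{5.4}) and Theorem 4.2 are in hand, the corollary is an immediate consequence requiring no separate computation.
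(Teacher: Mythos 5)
Your proposal is correct and follows exactly the route the paper intends: the corollary is an immediate consequence of the identity $\overline{\sigma}(X,Y)=\sigma(X,Y)$ in (\ref{5.4}) together with the equality of mean curvature vectors from Theorem 4.2, since the induced metric $g$ is the same for both connections. Nothing further is needed.
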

We now construct an example:
\begin{example}
Let us consider the $5$-dimensional manifold $\widetilde{M} = \{(x,y,z,u,v)\in \mathbb{R}^5: (x,y,z,u,v)\neq (0,0,0,0,0)\}$, where $(x,y,z,u,v)$ are the standard coordinates in $\mathbb{R}^5$. The vector fields \\$e_1= e^{-z}\frac{\partial}{\partial x}$,\ \ \ \ $e_2 = e^{-z}\frac{\partial}{\partial y}$, \ \ \ \ $e_3 = e^{-2z}\frac{\partial}{\partial z}$,\ \ \ \ $e_4 =e^{-z}\frac{\partial}{\partial u}$,\ \ \ \ $e_5 = e^{-z}\frac{\partial}{\partial v}$\\ are linearly independent at each point of $\widetilde{M}$.\\
Let $\widetilde{g}$ be the metric defined by
\begin{eqnarray*}
\widetilde{g}(e_i,e_j)=\left\{\begin{array}{ll}
1,\, for \,\, i=j\neq3,\\
0,\, for \,\, i\neq j,\\
-1,\ for \, i=j=3.
\end{array}\right.
\end{eqnarray*}
Here $i$ and $j$ runs over 1 to 5.\\ Let $\eta$ be the 1-form defined by $\eta(Z)=\widetilde{g}(Z,e_3)$, for any vector field $Z\in \chi(\widetilde{M})$. Let $\phi$ be the (1,1) tensor field defined by $\phi e_1= e_1$,\ \ \ \
$\phi e_2 = e_2$,\ \ \ \  $\phi e_3=0$,\ \ \ \ $\phi e_4=  e_4$,\ \ \ \  $\phi e_5=e_5$. Then using the linearity property of $\phi$ and $\widetilde{g}$ we have   $\eta(e_3)=-1,$\ \ \ \  $\phi^2 U=U+\eta(U)\xi$\ \ and\ \ $\widetilde{g}(\phi U,\phi V)=\widetilde{g}(U,V)+\eta(U)\eta(V)$, for every $U,\ V\in \chi(\widetilde{M})$.Thus for $e_3=\xi$, $(\phi,\xi,\eta,g)$ defines a Lorentzian paracontact structure on $\widetilde{M}$ .Let $\widetilde{\nabla}$ be the Levi-Civita connection on $\widetilde{M}$ with respect to the metric $\widetilde{g}$. Then we have $[e_1, e_3]=e^{-2z}e_1$,\ $[e_2, e_3]=e^{-2z}e_2$,\ \
$[e_4, e_3]=e^{-2z}e_4$,\ \ $[e_5, e_3]=e^{-2z}e_5$.\\
Now, using Koszul's formula for $\widetilde{g}$, it can be calculated that $\widetilde{\nabla}_{e_1}e_1=e^{-2z}e_3$,
 \ \  $\widetilde{\nabla}_{e_1}e_3=e^{-2z}e_1$,\ \ \ \ $\widetilde{\nabla}_{e_2}e_2=e^{-2z}e_3$,\ \ \ \ $\widetilde{\nabla}_{e_2}e_3=e^{-2z}e_2$,\ \
 $\widetilde{\nabla}_{e_4}e_3=e^{-2z}e_4$,\ \ \ \ $\widetilde{\nabla}_{e_4}e_4=e^{-2z}e_3$,\ \ \ \ $\widetilde{\nabla}_{e_5}e_3=e^{-2z}e_5$,
\ \ and \ \  $\widetilde{\nabla}_{e_5}e_5=e^{-2z}e_3$.\\
and rest of the terms are zero.\\
Since $\{e_1,e_2,e_3,e_4,e_5\}$ is a frame field, then any vector field $X,Y\in T\widetilde{M}$ can be written as
\begin{eqnarray*}
X &=& x_1e_1+x_2e_2+x_3e_3+x_4e_4+x_5e_5,\\
Y &=& y_1e_1+y_2e_2+y_3e_3+y_4e_4+y_5e_5.
\end{eqnarray*}
where $x_i,y_i \in \mathbb{R},\ \  i= 1,2,3,4,5$ such that$$  x_1y_1+x_2y_2-x_3y_3+x_4y_4+x_5y_5 \neq 0$$ and hence
\begin{equation}\label{5.13}
\widetilde{g}(X,Y)=\left(x_1y_1+x_2y_2-x_3y_3+x_4y_4+x_5y_5 \right).
\end{equation}
Therefore,
\begin{eqnarray}
\label{5.13a}\widetilde{\nabla}_XY&=&e^{-2z}[x_1y_3e_1+x_2y_3e_2+(x_1y_1+x_2y_2+x_4y_4+x_5y_5)e_3\\
\nonumber&&+x_4y_3e_4+x_5y_3e_5]
\end{eqnarray}
and
\begin{eqnarray}
\label{5.14}
\overline{\widetilde{\nabla}}_XY&=&e^{-2z}[x_1y_3e_1+x_2y_3e_2+(x_1y_1+x_2y_2+x_4y_4+x_5y_5)e_3\\
\nonumber&&+x_4y_3e_4+x_5y_3e_5]-y_3\left(x_1e_1+x_2e_2+x_4e_4+x_5e_5\right)\\
\nonumber&&-\left(x_1y_1+x_2y_2+x_4y_4+x_5y_5\right)e_3
\end{eqnarray}
From the above it can be easily seen that $(\phi, \xi, \eta,g)$ is a $(LCS)_5$ structure 
on $\widetilde{M}$ with $\alpha= e^{-2z}\neq0$ such that $X(\alpha)=\rho \eta(X)$ where $\rho=2e^{-4z}$. 
Also $\overline{\widetilde{\nabla}}_X\widetilde{g}=0$. Thus in an $(LCS)_5$-manifold quarter symmetric metric connection is given by (\ref{5.14}).\\
Let $f$ be an isometric immersion from $M$ to $\widetilde{M}$ defined by $f(x,y,z)=(x,y,z,0,0)$.
Let $M = \{(x,y,z)\in \mathbb{R}^3: (x,y,z)\neq (0,0,0)\}$, where $(x,y,z)$ are the standard coordinates
in $\mathbb{R}^3$. The vector fields \\$e_1= e^{-z}\frac{\partial}{\partial x}$,\ \ \ \ $e_2 = e^{-z}\frac{\partial}{\partial y}$, \ \ \ \ $e_3 = e^{-2z}\frac{\partial}{\partial z}$ are linearly independent at each point of $M$.\\
Let  $g$ be the induced metric defined by
\begin{eqnarray*}
g(e_i,e_j)=\left\{\begin{array}{ll}
1,\, for \, i=j\neq3,\\
0,\, for \,\, i\neq j,\\
-1,\ for \, i=j=3.
\end{array}\right.
\end{eqnarray*}
Here $i$ and $j$ runs over 1 to 3.\\
Let $\nabla$ be the Levi-Civita connection on $M$ with respect to the metric $g$. Then we have $[e_1, e_3]=e^{-2z}e_1$,\ $[e_2, e_3]=e^{-2z}e_2$.
Now, using Koszul's formula for $g$, it can be calculated that$\nabla_{e_1}e_1=e^{-2z}e_3$,\ \ $\nabla_{e_1}e_3=e^{-2z}e_1$,\ \ \ \ $\nabla_{e_2}e_2=e^{-2z}e_3$,\ \ \ \ $\nabla_{e_2}e_3=e^{-2z}e_2$, and rest of the terms are zero.
Clearly $\{e_4, e_5\}$ is the frame field for the normal bundle $T^\bot M$. If we take $Z\in TM$ then $\phi Z\in TM$ and
therefore $M$ is an invariant submanifold of $\widetilde{M}$. If we take $X,\ Y\in TM$ then we can express them as
\begin{eqnarray*}
X &=& x_1e_1+x_2e_2+x_3e_3,\\
Y &=& y_1e_1+y_2e_2+y_3e_3.
\end{eqnarray*}
Therefore
\begin{equation*}
\nabla_XY=e^{-2z}[x_1y_3e_1+x_2y_3e_2+(x_1y_1+x_2y_2)e_3]\\.
\end{equation*}
Therefore the second fundamental form
\begin{eqnarray}
\sigma(X,Y)=e^{-2z}(x_4y_3e_4+x_5y_3e_5).
\end{eqnarray}
Now, for $X,Y\in TM$ we have
\begin{eqnarray*}
\overline{\widetilde{\nabla}}_XY&=&e^{-2z}[x_1y_3e_1+x_2y_3e_2+(x_1y_1+x_2y_2)e_3+x_4y_3e_4+x_5y_3e_5]-y_3\left(x_1e_1+x_2e_2\right)\\
\nonumber&&-\left(x_1y_1+x_2y_2\right)e_3.
\end{eqnarray*}
Tangential part of $\overline{\widetilde{\nabla}}_XY$ is given by
\begin{eqnarray*}
\overline{\nabla}_XY &=&e^{-2z}[x_1y_3e_1+x_2y_3e_2+(x_1y_1+x_2y_2)e_3]-y_3\left(x_1e_1+x_2e_2\right)\\
\nonumber&&-\left(x_1y_1+x_2y_2\right)e_3.\\
\nonumber&&=\nabla_XY+\eta(Y)\phi X-g(\phi X,Y)\xi,
\end{eqnarray*}
which means $M$ admits quarter symmetric metric connection and the normal part of $\overline{\widetilde{\nabla}}_XY$ is given by
\begin{eqnarray*}
\overline{\sigma}(X,Y)&=&e^{-2z}(x_4y_3e_4+x_5y_3e_5)\\
&=& \sigma(X,Y),
\end{eqnarray*}
which implies that the second fundamental forms with respect to $\overline{\widetilde{\nabla}}$ and $\widetilde{\nabla}$ are equal.
Also $H$ corresponds to Levi-Civita connection as well as quarter symmetric metric connection is zero.
Therefore, $M$ is totally umbilical with respect to both the connections.
\end{example}
Thus Theorem 4.1, Theorem 4.2, Corollary 4.1 and Corollary 4.2 are verified.\\
From (\ref{3.32}) we get
\begin{equation}\label{5.6}
\overline{\sigma}(X,\xi)=0.
\end{equation}
Also from (\ref{4.6}) we have for $V\in T^\bot M$
\begin{eqnarray*}
\nonumber\overline{\widetilde{\nabla}}_XV &=& \widetilde{\nabla}_XV+\eta(V)\phi X-g(\phi X,V)\xi \\
\nonumber&=& \widetilde{\nabla}_XV
\end{eqnarray*}
From (\ref{3.18})we get
\begin{equation}\label{5.6a}
\overline{\widetilde{\nabla}}_XV =-A_VX+\nabla_X^\bot V
\end{equation}
\begin{theorem}
Let $M$ be an invariant submanifold of a $(LCS)_n$-manifold $\widetilde{M}$ with respect to a quarter symmetric metric connection. Then Gauss and Weingarten formulae with respect to quarter symmetric metric connection are given by
\begin{eqnarray}
\nonumber  tan(\overline{\widetilde{R}}(X,Y)Z) &=& \widetilde{R}(X,Y)Z+\eta(\nabla_YZ)\phi X-\eta(\nabla _X Z)\phi Y-g(\phi X,\nabla_YZ)\xi \\
\label{5.9}&& +g(\phi Y, \nabla_XZ)\xi-A_{\sigma(Y,Z)}X+A_{\sigma(X,Z)}Y+\nabla_X\eta (Z)\phi Y\\
\nonumber&&-\nabla_Y\eta (Z)\phi X+\eta(Z)\nabla_X\phi Z-\eta(Z)\nabla_Y\phi Z-\nabla_Xg(\phi Y,Z)\xi \\
\nonumber&&+\nabla _Yg(\phi X,Z)\xi-(\alpha -1)g(\phi Y,Z)\phi X+(\alpha-1)g(\phi X,Z)\phi Y \\
\nonumber&&-\eta(Z)\phi[X,Y]+g(\phi[X,Y],Z)\xi,
\end{eqnarray}
\begin{eqnarray}
\label{5.10} nor(\overline{\widetilde{R}}(X,Y)Z) &=& \sigma(X,\nabla_YZ)-\sigma(Y,\nabla_XZ)+\nabla_X^\bot \sigma(Y,Z)\\
\nonumber&&-\nabla_Y^\bot \sigma(X,Z)+\eta(Z)\sigma(X,\phi Z)\\
\nonumber&&-\eta(Z)\sigma(Y,\phi Z)-\sigma([X,Y],Z)
\end{eqnarray}
\end{theorem}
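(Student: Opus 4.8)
The plan is to start from the definition $\overline{\widetilde{R}}(X,Y)Z=\overline{\widetilde{\nabla}}_X\overline{\widetilde{\nabla}}_YZ-\overline{\widetilde{\nabla}}_Y\overline{\widetilde{\nabla}}_XZ-\overline{\widetilde{\nabla}}_{[X,Y]}Z$ and to expand every term using the Gauss and Weingarten data for the quarter symmetric metric connection. Two structural facts make the bookkeeping tractable and I would record them first. The Gauss formula (\ref{5.2}) together with $\overline{\sigma}=\sigma$ from (\ref{5.4}) gives $\overline{\widetilde{\nabla}}_YZ=\overline{\nabla}_YZ+\sigma(Y,Z)$, so an inner covariant derivative already splits into a tangential and a normal piece. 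The Weingarten formula (\ref{5.6a}) is, for an invariant submanifold, literally the Levi-Civita one, because $\eta(V)=0$ and $g(\phi X,V)=0$ whenever $V\in T^\bot M$; hence the shape operator $A_V$ and the normal connection $\nabla^\bot$ are unchanged by passing to $\overline{\widetilde{\nabla}}$.

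Next I would differentiate once more. Since $\overline{\nabla}_YZ$ is tangent and $\sigma(Y,Z)$ is normal, applying $\overline{\widetilde{\nabla}}_X$ and using (\ref{5.2}) on the tangential part and (\ref{5.6a}) on the normal part yields $\overline{\widetilde{\nabla}}_X\overline{\widetilde{\nabla}}_YZ=\overline{\nabla}_X\overline{\nabla}_YZ+\sigma(X,\overline{\nabla}_YZ)-A_{\sigma(Y,Z)}X+\nabla_X^\bot\sigma(Y,Z)$, with the analogous expression after interchanging $X$ and $Y$, while $\overline{\widetilde{\nabla}}_{[X,Y]}Z=\overline{\nabla}_{[X,Y]}Z+\sigma([X,Y],Z)$. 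Collecting the purely tangential and the purely normal contributions separates $\overline{\widetilde{R}}(X,Y)Z$ into a tangential term $\overline{R}(X,Y)Z-A_{\sigma(Y,Z)}X+A_{\sigma(X,Z)}Y$, where $\overline{R}$ is the curvature of the induced connection $\overline{\nabla}$, and a normal term $\sigma(X,\overline{\nabla}_YZ)-\sigma(Y,\overline{\nabla}_XZ)+\nabla_X^\bot\sigma(Y,Z)-\nabla_Y^\bot\sigma(X,Z)-\sigma([X,Y],Z)$.

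The normal part then finishes quickly: substituting (\ref{5.3}), namely $\overline{\nabla}_YZ=\nabla_YZ+\eta(Z)\phi Y-g(\phi Y,Z)\xi$, into each term of the form $\sigma(\,\cdot\,,\overline{\nabla}_\bullet\,\cdot\,)$ and using $\sigma(\,\cdot\,,\xi)=0$ from (\ref{3.32}) and (\ref{5.6}) to kill the $\xi$-valued contributions, with bilinearity (\ref{3.20}) absorbing the scalar factors, produces exactly (\ref{5.10}).

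The bulk of the work is the tangential formula (\ref{5.9}), which requires expanding $\overline{R}(X,Y)Z$ in terms of the induced Levi-Civita curvature. For this I would insert (\ref{5.3}) twice into $\overline{\nabla}_X\overline{\nabla}_YZ-\overline{\nabla}_Y\overline{\nabla}_XZ-\overline{\nabla}_{[X,Y]}Z$, differentiate the correction terms $\eta(Z)\phi Y$ and $g(\phi Y,Z)\xi$ by the Leibniz rule, and evaluate the resulting derivatives with $\nabla_X\xi=\alpha\phi X$ from (\ref{3.31}), $(\nabla_X\phi)Y$ from (\ref{3.35}), and the identities (\ref{3.36}); finally I would rewrite the remaining induced curvature $R(X,Y)Z$ through the Gauss equation (\ref{3.29}) to reintroduce the ambient tensor $\widetilde{R}(X,Y)Z$. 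This is the step where all the auxiliary terms of (\ref{5.9}) — the $\eta(\nabla_YZ)\phi X$, $\nabla_X\eta(Z)\phi Y$, the $(\alpha-1)g(\phi X,Z)\phi Y$ pieces, and the various $\xi$-valued contributions — are generated, and I expect the main obstacle to be purely one of bookkeeping: keeping the antisymmetrization in $X,Y$ consistent with the $[X,Y]$ contribution so that no sign or indexing slip creeps in. Once these are assembled and the tangential projection is taken, the stated identity follows.
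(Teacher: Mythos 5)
Your proposal follows essentially the same route as the paper: expand $\overline{\widetilde{R}}(X,Y)Z=\overline{\widetilde{\nabla}}_X\overline{\widetilde{\nabla}}_YZ-\overline{\widetilde{\nabla}}_Y\overline{\widetilde{\nabla}}_XZ-\overline{\widetilde{\nabla}}_{[X,Y]}Z$ via the Gauss and Weingarten formulae (\ref{5.2}), (\ref{5.6a}), the relation (\ref{5.3}) between the induced connections, and the identities (\ref{3.31}), (\ref{3.32}), (\ref{3.36}), then separate tangential and normal parts. Your intermediate step of first isolating the curvature $\overline{R}$ of the induced connection before re-expanding it is only a reorganization of the same computation, so the argument is correct and matches the paper's proof.
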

\begin{proof}
The Riemannian curvature tensor $\overline{\widetilde{R}}$  on $\widetilde{M}$ with respect to quarter symmetric metric connection is given by
\begin{equation}\label{5.11}
\overline{\widetilde{R}}(X,Y)Z =\overline{\widetilde{\nabla}}_X\overline{\widetilde{\nabla}}_YZ-\overline{\widetilde{\nabla}}_X\overline{\widetilde{\nabla}}_YZ-\overline{\widetilde{\nabla}}_{[X,Y]}Z.
\end{equation}
By using (\ref{3.32}), (\ref{3.36}), (\ref{4.6}), (\ref{5.2}), (\ref{5.4}) and (\ref{5.6a}) in (\ref{5.11}) we get
\begin{eqnarray}
\nonumber\overline{\widetilde{R}}(X,Y)Z &=& \widetilde{R}(X,Y)Z+\sigma(X,\nabla_YZ)+\eta(\nabla_YZ)\phi X-g(\phi X,\nabla_YZ)\xi\\
\label{5.12}&&-A_{\sigma(Y,Z)}X+\nabla_X^\bot \sigma(Y,Z)+\nabla_X\eta(Z)\phi Y+\eta(Z)\nabla_X\phi Z\\
\nonumber &&+\eta(Z)\sigma(X,\phi Z) -\nabla_Xg(\phi Y,Z)\xi-(\alpha-1)g(\phi Y,Z)\phi X-\sigma(Y,\nabla_XZ)\\
\nonumber&&-\eta(\nabla_XZ)\phi Y+g(\phi Y,\nabla_XZ)\xi +A_{(\sigma X,Z)}Y-\nabla_Y^\bot \sigma(X,Z)-\nabla_Y\eta(Z)\phi X\\
\nonumber&&-\eta(Z)\nabla_Y\phi Z-\eta(Z)\sigma(Y,\phi Z)+\nabla_Yg(\phi X,Z)\xi+(\alpha-1)g(\phi X,Z)\phi Y\\
\nonumber&&-\sigma([X,Y],Z)-\eta(Z)\phi[X,Y]+g(\phi[X,Y],Z)\xi.
\end{eqnarray}
Comparing the tangential and normal part of the equation (\ref{5.12}) we get the Gauss and Weingarten formulae as (\ref{5.9}) and (\ref{5.10}).
\end{proof}
\section{Recurrent invariant submanifold of $(LCS)_n$-manifold with respect to quarter symmetric metric connection}
We consider invariant submaniofld of a $(LCS)_n$-manifold when $\sigma$ is recurrent, 2-recurrent,
generalized 2-recurrent and $M$ has parallel third fundamental form with respect to quarter
symmetric metric connection. We write the equations (\ref{3.21}) and (\ref{3.22}) with respect
to quarter symmetric metric connection in the form
\begin{eqnarray}
\label{6.1}(\overline{\widetilde{\nabla}}_X\sigma)(Y,Z) &=& \overline{\nabla}^\bot_X(\sigma(Y,Z))-\sigma(\overline{\nabla}_XY,Z)-\sigma(Y,\overline{\nabla}_XZ) \\
\label{6.2} (\overline{\widetilde{\nabla}}^2\sigma)(Z,W,X,Y) &=&(\overline{\widetilde{\nabla}}_X\overline{\widetilde{\nabla}}_Y\sigma)(Z,W)  \\
\nonumber &=&\overline{\nabla}^\bot_X((\overline{\widetilde{\nabla}}_Y\sigma)(Z,W))-(\overline{\widetilde{\nabla}}_Y\sigma)(\overline{\nabla}_XZ,W)\\
\nonumber&&-(\overline{\widetilde{\nabla}}_X\sigma)(Z,\overline{\nabla}_YW)-(\overline{\widetilde{\nabla}}_{\overline{\nabla}_XY}\sigma)(Z,W).
\end{eqnarray}
\begin{theorem}
  Let $M$ be an invariant submanifold of a $(LCS)_n$-manifold $\widetilde{M}$ with respect to quarter symmetric metric connection. Then $\sigma$ is recurrent with respect to the quarter symmetric metric connection if and only if it is totally geodesic with respect to the Levi-Civita connection, provided $\alpha\neq1$.
\end{theorem}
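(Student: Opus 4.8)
The plan is to reduce the recurrence condition with respect to $\overline{\widetilde{\nabla}}$ to a statement about Levi-Civita data, and then to specialize the third slot to $\xi$, where the invariant-submanifold identities force $\sigma$ to vanish. Recall that $\sigma$ is recurrent with respect to the quarter symmetric metric connection exactly when there is a $1$-form $\pi$ with $(\overline{\widetilde{\nabla}}_X\sigma)(Y,Z)=\pi(X)\,\sigma(Y,Z)$ for all $X,Y,Z$ tangent to $M$; this is the content of (\ref{2.1})--(\ref{2.3}) applied to $T=\sigma$.

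First I would compute $(\overline{\widetilde{\nabla}}_X\sigma)(Y,Z)$ explicitly from (\ref{6.1}). For a normal field $V\in T^\bot M$ the correction terms $\eta(V)\phi X$ and $g(\phi X,V)\xi$ in (\ref{4.6}) vanish on an invariant submanifold, so the normal part of $\overline{\widetilde{\nabla}}$ agrees with $\nabla^\bot$ and the first term of (\ref{6.1}) equals $\nabla^\bot_X(\sigma(Y,Z))$. Substituting $\overline{\nabla}_XY=\nabla_XY+\eta(Y)\phi X-g(\phi X,Y)\xi$ from (\ref{5.3}) into the remaining two terms, then using $\sigma(\cdot,\xi)=0$ from (\ref{3.32}) to discard the $g(\phi X,\,\cdot\,)\xi$ pieces and $\sigma(\phi X,\,\cdot\,)=\sigma(X,\,\cdot\,)$ from (\ref{3.36}) to simplify the $\eta$-pieces, the corrections collapse to
\begin{equation*}
(\overline{\widetilde{\nabla}}_X\sigma)(Y,Z)=(\widetilde{\nabla}_X\sigma)(Y,Z)-\eta(Y)\sigma(X,Z)-\eta(Z)\sigma(X,Y),
\end{equation*}
where $(\widetilde{\nabla}_X\sigma)(Y,Z)$ is the Levi-Civita third fundamental form (\ref{3.21}).

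The decisive step is to set $Z=\xi$. Then $\sigma(Y,\xi)=0$ kills both the right-hand side $\pi(X)\sigma(Y,\xi)$ of the recurrence relation and the $\eta(Y)\sigma(X,\xi)$ term, while $\eta(\xi)=-1$ converts the last correction into $+\sigma(X,Y)$. It remains to evaluate $(\widetilde{\nabla}_X\sigma)(Y,\xi)$: expanding via (\ref{3.21}) and using $\sigma(\cdot,\xi)=0$ leaves only $-\sigma(Y,\nabla_X\xi)$, and since $\nabla_X\xi=\alpha\phi X$ by (\ref{3.31}) together with $\sigma(Y,\phi X)=\sigma(X,Y)$ by (\ref{3.36}), this equals $-\alpha\,\sigma(X,Y)$. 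Combining everything yields $(1-\alpha)\sigma(X,Y)=0$ for all $X,Y$, so the hypothesis $\alpha\neq1$ forces $\sigma=0$, i.e. $M$ is totally geodesic with respect to the Levi-Civita connection. The converse is immediate, since $\sigma=0$ makes $\overline{\widetilde{\nabla}}\sigma=0=\sigma\otimes\pi$ hold trivially.

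I expect the only genuine obstacle to be the bookkeeping in the first step: correctly tracking the four correction terms generated by (\ref{5.3}) inside $\sigma(\overline{\nabla}_XY,Z)$ and $\sigma(Y,\overline{\nabla}_XZ)$, and verifying that the two $g(\phi X,\,\cdot\,)\xi$ contributions annihilate through $\sigma(\cdot,\xi)=0$ while the two $\eta$-contributions survive after applying (\ref{3.36}). Once the clean identity for $(\overline{\widetilde{\nabla}}_X\sigma)$ is secured, the substitution $Z=\xi$ and the formula $\nabla_X\xi=\alpha\phi X$ make the factor $(1-\alpha)$ surface automatically, and the equivalence drops out.
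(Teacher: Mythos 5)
Your proof is correct and follows essentially the same route as the paper: specialize the recurrence relation at $Z=\xi$, use $\sigma(\cdot,\xi)=0$, $\nabla_X\xi=\alpha\phi X$, $\overline{\nabla}_X\xi=(\alpha-1)\phi X$ and $\sigma(\phi X,Y)=\sigma(X,Y)$ to extract $(\alpha-1)\sigma(X,Y)=0$. The only cosmetic difference is that you first establish the general identity $(\overline{\widetilde{\nabla}}_X\sigma)(Y,Z)=(\widetilde{\nabla}_X\sigma)(Y,Z)-\eta(Y)\sigma(X,Z)-\eta(Z)\sigma(X,Y)$ before substituting $Z=\xi$, whereas the paper substitutes immediately; both computations agree.
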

\begin{proof}
Let $\sigma$ be recurrent with respect to quarter symmetric metric connection. Then from (\ref{2.3}) we get
\begin{equation*}
(\overline{\widetilde{\nabla}}_X\sigma)(Y,Z)=\pi (X) \sigma(Y,Z),
\end{equation*}
where $\pi$ is a 1-form on $M$. By using (\ref{6.1}) and $Z=\xi$ in the above equation we have
\begin{equation}\label{6.3}
\overline{\nabla}^\bot_X(\sigma(Y,\xi))-\sigma(\overline{\nabla}_XY,\xi)-\sigma(Y,\overline{\nabla}_X\xi)=\pi(X)\sigma(Y,\xi).
\end{equation}
  which by virtue of (\ref{3.32}) reduces to
  \begin{equation}\label{6.4}
  -\sigma(\overline{\nabla}_XY,\xi)-\sigma(Y,\overline{\nabla}_X\xi)=0.
  \end{equation}
  Using (\ref{3.31}), (\ref{3.32}), (\ref{3.36}) and (\ref{5.3}) we obtain $(\alpha-1) \ \sigma(X,Y)=0$. So, we get $\sigma(X,Y)=0$, provided $\alpha\neq1$ i.e., $M$ is totally geodesic, provided $\alpha\neq1$. The converse statement is trivial. This proves the theorem.
\end{proof}
\begin{theorem}
Let $M$ be an invariant submanifold of a $(LCS)_n$-manifold $\widetilde{M}$ admitting quarter symmetric metric connection. Then $M$ has parallel third fundamental form with respect to the quarter symmetric metric connection if and only if it is totally geodesic with respect to the Levi-Civita connection, provided $\alpha\neq1$.
\end{theorem}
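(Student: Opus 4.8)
The plan is to unwind the phrase ``parallel third fundamental form with respect to $\overline{\widetilde{\nabla}}$''. By the terminology fixed after (\ref{3.22}), the third fundamental form is $\overline{\widetilde{\nabla}}\sigma$, so asking it to be parallel means $\overline{\widetilde{\nabla}}^2\sigma=0$, the second covariant derivative being the one recorded in (\ref{6.2}). The converse implication of the theorem is immediate: if $M$ is totally geodesic for the Levi-Civita connection then $\sigma=0$, hence $\overline{\sigma}=\sigma=0$ by (\ref{5.4}), and every covariant derivative of the zero tensor vanishes. So the whole content lies in the forward direction, and I would extract from $\overline{\widetilde{\nabla}}^2\sigma=0$ a scalar multiple of $\sigma$ that is forced to be zero.

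The engine of the argument is a first-order identity that I would establish before touching the second derivative. Starting from (\ref{6.1}) with the last $\sigma$-argument equal to $\xi$, the two terms containing $\sigma(\,\cdot\,,\xi)$ drop out by (\ref{3.32}), leaving $(\overline{\widetilde{\nabla}}_X\sigma)(Y,\xi)=-\sigma(Y,\overline{\nabla}_X\xi)$. Computing $\overline{\nabla}_X\xi$ from (\ref{5.3}) together with (\ref{3.31}) and the relations $\eta(\xi)=-1,\ \eta(\phi X)=0$ of (\ref{3.9}) gives $\overline{\nabla}_X\xi=(\alpha-1)\phi X$, after which (\ref{3.36}) collapses $\sigma(Y,\phi X)$ to $\sigma(X,Y)$. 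This yields the key identity
\begin{equation*}
(\overline{\widetilde{\nabla}}_X\sigma)(Y,\xi)=-(\alpha-1)\,\sigma(X,Y),
\end{equation*}
valid on every invariant submanifold; I would also note that $\overline{\widetilde{\nabla}}_X\sigma$ is symmetric in its two $\sigma$-slots (this follows purely from the symmetry of $\sigma$), so the same formula holds with $\xi$ placed in the first slot.

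Next I would impose $\overline{\widetilde{\nabla}}^2\sigma=0$, write out (\ref{6.2}) with the base argument set to $\xi$, and substitute the key identity into every resulting term in which $\xi$ sits in a $\sigma$-slot. The normal-derivative term $\overline{\nabla}^\bot_X\big((\overline{\widetilde{\nabla}}_Y\sigma)(Z,\xi)\big)$ I would expand by Leibniz, using $X\alpha=\rho\,\eta(X)$ from (\ref{3.5}) and re-expressing $\overline{\nabla}^\bot_X\sigma(Y,Z)$ through (\ref{6.1}). The bookkeeping is arranged so that the paired terms $\pm(\alpha-1)\sigma(\overline{\nabla}_XY,Z)$ and $\pm(\alpha-1)\sigma(Y,\overline{\nabla}_XZ)$ cancel, leaving the reduced relation
\begin{equation*}
\rho\,\eta(X)\,\sigma(Y,Z)+(\alpha-1)(\overline{\widetilde{\nabla}}_X\sigma)(Y,Z)+(\alpha-1)(\overline{\widetilde{\nabla}}_Y\sigma)(Z,\phi X)=0 .
\end{equation*}

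Finally I would specialize $Z=\xi$ in this reduced relation. The first term dies because $\sigma(Y,\xi)=0$; the second becomes $-(\alpha-1)^2\sigma(X,Y)$ by the key identity; and the third, after using symmetry to move $\xi$ into the trailing slot and (\ref{3.36}) to replace $\sigma(Y,\phi X)$ by $\sigma(X,Y)$, contributes another $-(\alpha-1)^2\sigma(X,Y)$. Equating the sum to zero gives $2(\alpha-1)^2\sigma(X,Y)=0$, whence $\sigma=0$ as soon as $\alpha\neq1$; by (\ref{5.4}) this is exactly total geodesy for the Levi-Civita connection. The step I expect to be the real obstacle is the term $(\overline{\widetilde{\nabla}}_Y\sigma)(Z,\phi X)$ produced by differentiating $\xi$: since $\phi X\neq\xi$ it cannot be simplified on the first pass, and the whole point of carrying it intact into the reduced relation is that the subsequent specialization $Z=\xi$, combined with the symmetry of $\overline{\widetilde{\nabla}}_Y\sigma$ and (\ref{3.36}), is what forces it to collapse back onto $\sigma$ rather than onto a new, uncontrolled derivative. (One should also check that the same collapse occurs if the term $(\overline{\widetilde{\nabla}}_X\sigma)(Z,\overline{\nabla}_YW)$ in (\ref{6.2}) is read literally, producing $(\overline{\widetilde{\nabla}}_X\sigma)(Z,\phi Y)$ instead; it does, and the final coefficient $2(\alpha-1)^2$ is unchanged.)
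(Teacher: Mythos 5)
Your proposal is correct and follows essentially the same route as the paper: both proofs specialize the vanishing second covariant derivative (\ref{6.2}) at $W=\xi$, simplify using (\ref{3.31}), (\ref{3.32}), (\ref{3.36}) and (\ref{5.3}), then set $Z=\xi$ to extract $2(\alpha-1)^2\sigma(X,Y)=0$, with the converse being trivial. Your packaging of the simplifications into the first-order identity $(\overline{\widetilde{\nabla}}_X\sigma)(Y,\xi)=-(\alpha-1)\sigma(X,Y)$, and your observation that the literal versus corrected reading of the third term in (\ref{3.22})/(\ref{6.2}) leads to the same coefficient, are just cleaner bookkeeping of the computation the paper carries out in (\ref{6.5})--(\ref{6.7}).
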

\begin{proof}
Let $M$ has parallel third fundamental form with respect to quarter symmetric metric connection. Then we have
\begin{equation*}
(\overline{\widetilde{\nabla}}_X\overline{\widetilde{\nabla}}_Y\sigma)(Z,W)=0.
\end{equation*}
Taking $W=\xi$ and and using (\ref{6.2}) in the above equation, we get
\begin{eqnarray}
\label{6.5} 0&=&\overline{\nabla}^\bot_X((\overline{\widetilde{\nabla}}_Y\sigma)(Z,\xi))-(\overline{\widetilde{\nabla}}_Y\sigma)(\overline{\nabla}_XZ,\xi)\\
\nonumber&&-(\overline{\widetilde{\nabla}}_X\sigma)(Z,\overline{\nabla}_Y\xi)-(\overline{\widetilde{\nabla}}_{\overline{\nabla}_XY}\sigma)(Z,\xi).
\end{eqnarray}
By using (\ref{3.32}) and (\ref{6.1}) in (\ref{6.5}) we get
\begin{eqnarray}
\label{6.6}  0&=&-\overline{\nabla}^\bot_X\{\sigma(\overline{\nabla}_YZ,\xi)+\sigma(Z,\overline{\nabla}_Y\xi)\}-\overline{\nabla}^\bot_Y\sigma(\overline{\nabla}_XZ,\xi)\\
\nonumber&& +\sigma(\overline{\nabla}_Y\overline{\nabla}_XZ,\xi)+2\sigma(\overline{\nabla}_XZ,\overline{\nabla}_Y\xi)-\overline{\nabla}^\bot_X\sigma(Z,\overline{\nabla}_Y\xi)\\
\nonumber&&+\sigma(Z,\overline{\nabla}_X\overline{\nabla}_Y\xi)+\sigma(\overline{\nabla}_{\overline{\nabla}_XY}Z,\xi)+\sigma(Z,\overline{\nabla}_{\overline{\nabla}_XY}\xi).
\end{eqnarray}
In view of (\ref{3.31}), (\ref{3.32}), (\ref{3.36}) and (\ref{5.3}) the equation (\ref{6.6}) gives
\begin{eqnarray}
\label{6.7}
0&=&-2(\alpha-1) \overline{\nabla}^\bot_X\sigma(Z,Y)+2(\alpha-1)\sigma(\nabla_XZ,Y)\\
\nonumber&&+2(\alpha-1)\eta(Z)\sigma(X,Y)+(\alpha-1)\sigma(Z,\nabla_X\phi Y)\\
\nonumber&&+(\alpha-1)\sigma(Z,\nabla_XY)+(\alpha-1)\eta(Y)\sigma(Z,X).
\end{eqnarray}
Putting $Z=\xi$ in (\ref{6.7}) and using (\ref{3.9}), (\ref{3.31}), (\ref{3.32}) and (\ref{3.36}) we obtain\\ 2$(\alpha -1)^2 \ \sigma(X,Y)=0$. Therefore, $M$ is totally geodesic provided $\alpha \neq 1$. The converse statement is trivial. This proves the theorem.
\end{proof}
\begin{corollary}
Let $M$ be an invariant submanifold of a $(LCS)_n$-manifold $\widetilde{M}$ admitting quarter symmetric metric connection. Then $\sigma$ is 2-recurrent with respect to the quarter symmetric metric connection if and only if it is totally geodesic with respect to the Levi-Civita connection, provided $\alpha\neq1$.
\end{corollary}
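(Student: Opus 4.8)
The plan is to mirror the proof of Theorem 6.2, exploiting the fact that specialising the last slot of $\sigma$ to $\xi$ annihilates the recurrence term on the right-hand side. First I would record the analytic meaning of $2$-recurrence: by (\ref{2.2}) and (\ref{2.4}), $\sigma$ being $2$-recurrent with respect to $\overline{\widetilde{\nabla}}$ means that there is a $(0,2)$-tensor $\psi$ on $M$ with
\begin{equation*}
(\overline{\widetilde{\nabla}}^2\sigma)(Z,W,X,Y)=(\overline{\widetilde{\nabla}}_X\overline{\widetilde{\nabla}}_Y\sigma)(Z,W)=\psi(X,Y)\,\sigma(Z,W).
\end{equation*}

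The decisive step is to put $W=\xi$. By (\ref{3.32}) we have $\sigma(Z,\xi)=0$ identically, so the right-hand side $\psi(X,Y)\sigma(Z,\xi)$ vanishes, leaving
\begin{equation*}
(\overline{\widetilde{\nabla}}_X\overline{\widetilde{\nabla}}_Y\sigma)(Z,\xi)=0.
\end{equation*}
This is precisely the equation (\ref{6.5}) from which the proof of Theorem 6.2 departs. Hence I can invoke that computation verbatim: expanding the second covariant derivative through (\ref{6.2}) and (\ref{6.1}), then simplifying with (\ref{3.31}), (\ref{3.32}), (\ref{3.36}) and (\ref{5.3}), I arrive at the same relation (\ref{6.7}). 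Setting $Z=\xi$ in (\ref{6.7}) and using (\ref{3.9}), (\ref{3.31}), (\ref{3.32}) and (\ref{3.36}) yields $2(\alpha-1)^2\sigma(X,Y)=0$.

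Since $\alpha\neq1$, the factor $(\alpha-1)^2$ is nonzero, so $\sigma(X,Y)=0$ for all $X,Y$ tangent to $M$; that is, $M$ is totally geodesic with respect to the Levi-Civita connection. The converse is immediate: if $\sigma\equiv0$ then every covariant derivative of $\sigma$ vanishes and $\sigma$ is trivially $2$-recurrent. I do not expect a serious obstacle here. The only point needing care is the verification that the right-hand side genuinely drops out under $W=\xi$, and this is guaranteed by (\ref{3.32}) regardless of the unknown tensor $\psi$; after that the argument coincides with that of Theorem 6.2. The corollary therefore reduces to this single observation together with an already-established calculation, which is exactly why it is stated as a corollary rather than a theorem.
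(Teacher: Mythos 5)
Your argument is exactly the paper's: set $W=\xi$, use $\sigma(\cdot,\xi)=0$ to kill the recurrence term $\psi(X,Y)\sigma(Z,\xi)$, and reduce to the identity $(\overline{\widetilde{\nabla}}_X\overline{\widetilde{\nabla}}_Y\sigma)(Z,\xi)=0$ already analysed in the parallel-third-fundamental-form theorem (Theorem 5.2 in the paper's numbering, not 6.2), which yields $2(\alpha-1)^2\sigma(X,Y)=0$. In fact the paper's printed proof stops at that reduction, so you have merely made explicit the appeal to the earlier computation; the reasoning is correct and coincides with the intended one.
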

\begin{proof}
Let $\sigma$ be 2-recurrent with respect to quarter symmetric metric connection. Then we have,
\begin{equation*}
  (\overline{\widetilde{\nabla}}_X\overline{\widetilde{\nabla}}_Y\sigma)(Z,W)=\psi(X,Y)\sigma(Z,W).
\end{equation*}
where $\psi$ is a 2-form on $M$. Taking $W=\xi$ and using (\ref{3.32}) in the above equation we get
\begin{equation}\label{6.8}
(\overline{\widetilde{\nabla}}_X\overline{\widetilde{\nabla}}_Y\sigma)(Z,\xi)=0.
\end{equation}
\end{proof}
\begin{theorem}
Let $M$ be an invariant submanifold of a $(LCS)_n$-manifold $\widetilde{M}$ admitting quarter symmetric metric connection. Then $\sigma$ is generalized
2-recurrent with respect to the quarter symmetric metric connection if and only if it is totally geodesic with respect to the Levi-Civita connection, provided $\alpha\neq1$.
\end{theorem}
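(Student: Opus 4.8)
The plan is to prove Theorem 5.3 by following the same strategy that succeeded in Theorem 5.2, reducing the generalized $2$-recurrence condition to an algebraic multiple of $\sigma(X,Y)$ by specializing $\xi$ into the appropriate slots. By definition, $\sigma$ being generalized $2$-recurrent with respect to the quarter symmetric metric connection means, in the notation of (\ref{2.5}), that
\begin{equation*}
(\overline{\widetilde{\nabla}}^2\sigma)(Z,W,X,Y)=(\overline{\widetilde{\nabla}}\sigma\otimes\pi)(Z,W;X,Y)+\psi(X,Y)\,\sigma(Z,W).
\end{equation*}
First I would substitute $W=\xi$ and invoke (\ref{3.32}), namely $\sigma(X,\xi)=0$, so that the right-hand side collapses: the term $\psi(X,Y)\sigma(Z,\xi)$ vanishes outright, and the $(\overline{\widetilde{\nabla}}\sigma\otimes\pi)$ term reduces to $\pi(Y)\,(\overline{\widetilde{\nabla}}_X\sigma)(Z,\xi)$, which by (\ref{6.3})--(\ref{6.4}) and (\ref{3.31}) already carries a factor $(\alpha-1)$. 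Thus the generalized $2$-recurrence equation with $W=\xi$ becomes precisely the left-hand side $(\overline{\widetilde{\nabla}}_X\overline{\widetilde{\nabla}}_Y\sigma)(Z,\xi)$ set equal to a $(\alpha-1)$-multiple of a first-order term in $\sigma$.

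Next I would expand the left-hand side using the second covariant-derivative formula (\ref{6.2}) exactly as in the proof of Theorem 5.2, arriving at the analogue of (\ref{6.6}). The key computational engine is the repeated use of $\overline{\nabla}_X\xi=\alpha\phi X-\phi X=(\alpha-1)\phi X$, obtained by combining $\nabla_X\xi=\alpha\phi X$ from (\ref{3.31}) with the correction term in (\ref{5.3}), together with $\sigma(X,\xi)=0$, the invariance identities (\ref{3.36}), and the structure equation (\ref{3.35}). Carrying these through reproduces (\ref{6.7}), with the right-hand side now augmented only by the single extra $(\alpha-1)\pi(Y)\,(\overline{\widetilde{\nabla}}_X\sigma)(Z,\xi)$ term inherited from the generalized part. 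I would then set $Z=\xi$ and simplify via (\ref{3.9}), (\ref{3.31}), (\ref{3.32}) and (\ref{3.36}), just as at the end of the previous proof.

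At this point every surviving term is a scalar multiple of $\sigma(X,Y)$, the leading coefficient being $2(\alpha-1)^2$ as in Theorem 5.2; the additional generalized term, already proportional to $(\alpha-1)$ and to $\sigma$ after the $Z=\xi$ specialization (since $(\overline{\widetilde{\nabla}}_\xi\sigma)(\xi,\xi)$ and its relatives collapse through (\ref{3.32})), contributes only a further multiple of $(\alpha-1)^2\sigma(X,Y)$. Hence the whole equation reads $c\,(\alpha-1)^2\,\sigma(X,Y)=0$ for a nonzero numerical constant $c$, forcing $\sigma=0$ whenever $\alpha\neq1$, i.e. $M$ is totally geodesic with respect to the Levi-Civita connection. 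The converse is immediate, since $\sigma=0$ makes both sides of the defining equation vanish identically. The main obstacle I anticipate is bookkeeping rather than conceptual: one must verify that the extra $(\overline{\widetilde{\nabla}}\sigma\otimes\pi)$ contribution genuinely reduces to a pure $(\alpha-1)^2\sigma(X,Y)$ multiple after both specializations, so that it cannot cancel the principal $2(\alpha-1)^2\sigma(X,Y)$ term and leave $\sigma$ undetermined; checking that the two coefficients do not conspire to produce an identically vanishing net coefficient is the one place where care is required, and I would confirm it by tracking the $\pi(Y)$-dependence separately from the $\pi$-free part so that each must vanish on its own.
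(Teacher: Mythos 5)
Your proposal follows essentially the same route as the paper: substitute $W=\xi$ so that the $\psi$-term dies by $\sigma(\cdot,\xi)=0$, expand the second covariant derivative as in Theorem 5.2 to reach the analogue of (\ref{6.7}) with the extra $\pi$-weighted first-order term (the paper's (\ref{6.14})), then set $Z=\xi$ so that the extra term vanishes outright via (\ref{3.32}) and only $2(\alpha-1)^2\sigma(X,Y)=0$ survives. The cancellation worry you flag does not arise, since after the $Z=\xi$ specialization the generalized term is $-(\alpha-1)\pi(X)\sigma(Y,\xi)=0$ rather than a competing $(\alpha-1)^2$ multiple.
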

\begin{proof}
Let $\sigma$ be generalized 2-recurrent with respect to the quarter symmetric metric connection. Then from (\ref{2.5}) we have
\begin{equation}\label{6.12}
(\overline{\widetilde{\nabla}}_X\overline{\widetilde{\nabla}}_Y\sigma)(Z,W)=\psi(X,Y)\sigma(Z,W)+\pi(X)(\overline{\widetilde{\nabla}}_Y\sigma)(Z,W),
\end{equation}
where $\psi$ and $\pi$ are 2-form and 1-form respectively.\\ Taking $W=\xi$ in (\ref{6.12}) and using (\ref{3.32}) we get
\begin{equation}\label{6.13}
(\overline{\widetilde{\nabla}}_X\overline{\widetilde{\nabla}}_Y\sigma)(Z,\xi)=\pi (X)(\overline{\widetilde{\nabla}}_Y\sigma)(Z,\xi).
\end{equation}
In view of (\ref{3.31}), (\ref{3.32}), (\ref{3.36}), (\ref{5.3}), (\ref{6.1}) and (\ref{6.2}) the equation (\ref{6.13}) gives
\begin{eqnarray}
\label{6.14}
-2(\alpha-1) \overline{\nabla}^\bot_X\sigma(Z,Y)+2(\alpha-1)\sigma(\nabla_XZ,Y)\\
\nonumber+2(\alpha-1)\eta(Z)\sigma(X,Y)+(\alpha-1)\sigma(Z,\nabla_X\phi Y)\\
\nonumber+(\alpha-1)\sigma(Z,\nabla_XY)+(\alpha-1)\eta(Y)\sigma(Z,X)=\\
\nonumber-(\alpha-1)\pi(X)\sigma(Y,Z).
\end{eqnarray}

  Putting $Z=\xi$ in (\ref{6.14}) and using (\ref{3.9}), (\ref{3.31}), (\ref{3.32}) and (\ref{3.36}) we obtain \\ 2$(\alpha -1)^2 \ \sigma(X,Y)=0$. Therefore, $M$ is totally geodesic provided $\alpha \neq 1$. The converse statement is trivial. This proves the theorem.
\end{proof}
From Theorem 5.1, Theorem 5.2, Corollary 5.1 and Theorem 5.3, we can state the following:

\begin{theorem}
Let $M$ be an invariant submanifold of a $(LCS)_n$-manifold $\widetilde{M}$ admitting quarter symmetric metric connection. Then the following statements are equivalent:\\ \ (1) $\sigma$ is recurrent\\ \ (2) $\sigma$ is 2-recurrent with $\alpha\neq1$.\\ \ (3) $\sigma$ is generalized 2-recurrent with $\alpha\neq1$.\\ \ (4) $M$ has parallel third fundamental form, with $\alpha\neq1$.\\ \ (5) $M$ is totally geodesic with respect to Levi-Civita connection.
\end{theorem}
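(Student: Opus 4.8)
The plan is to exploit the fact that this final statement is a collation of the four results proved immediately above it, each of which already equates one of the conditions (1)--(4) with condition (5). Since every one of the first four conditions is shown to be equivalent to the single common statement that $M$ is totally geodesic with respect to the Levi-Civita connection $\nabla$, transitivity of logical equivalence will deliver the entire chain at once, and I would organize the whole argument around condition (5) as a central hub rather than proving the ten pairwise equivalences separately.

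Concretely, I would cite Theorem 5.1 for the equivalence (1) $\Leftrightarrow$ (5): $\sigma$ is recurrent with respect to the quarter symmetric metric connection precisely when $M$ is totally geodesic with respect to $\nabla$. Then Corollary 5.1 supplies (2) $\Leftrightarrow$ (5), Theorem 5.3 supplies (3) $\Leftrightarrow$ (5), and Theorem 5.2 supplies (4) $\Leftrightarrow$ (5). In each case the nontrivial forward implication is the one requiring $\alpha \neq 1$, while the converse is immediate, since total geodesy forces $\sigma = 0$ and the zero tensor trivially satisfies recurrence, $2$-recurrence, generalized $2$-recurrence, and parallelism of the third fundamental form. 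Composing these four biconditionals through the common node (5) yields that (1), (2), (3), (4) and (5) are mutually equivalent, which is exactly the assertion.

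The only matter requiring attention is bookkeeping on the hypothesis $\alpha \neq 1$: in the reductions carried out for Theorems 5.1--5.3 and 5.2 one arrives at an identity of the form $(\alpha-1)\,\sigma(X,Y) = 0$ or $(\alpha-1)^2\,\sigma(X,Y) = 0$, and cancelling the scalar factor to conclude $\sigma = 0$ is precisely what demands $\alpha \neq 1$; this is why conditions (2)--(4) carry that qualifier in the statement. I do not expect any genuine analytic obstacle, because all of the substantive computation has already been performed in establishing the four component results. Accordingly, I would write a short proof that simply records the four equivalences with their respective references and closes by transitivity through condition (5).
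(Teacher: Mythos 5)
Your proposal matches the paper exactly: the paper offers no separate argument for this theorem, stating only ``From Theorem 5.1, Theorem 5.2, Corollary 5.1 and Theorem 5.3, we can state the following,'' which is precisely your strategy of routing all four equivalences through condition (5) as a common hub and invoking transitivity. Your additional remarks on the trivial converses and on where the hypothesis $\alpha\neq 1$ enters are consistent with the proofs of those component results, so nothing further is needed.
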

\noindent{\bf Acknowledgement:} The first author (S. K. Hui)  gratefully acknowledges to
the SERB (Project No.: EMR/2015/002302), Govt. of India for financial assistance of the work.

\vspace{0.1in}
\noindent S. K. Hui\\
Department of Mathematics, The University of Burdwan, Golapbag, Burdwan -- 713104, West Bengal, India\\
E-mail: shyamal\_hui@yahoo.co.in, skhui@math.buruniv.ac.in\\

\noindent L.-I. Piscoran, North University Center of Baia Mare, Technical University of Cluj Napoca, Department of Mathematics and Computer Science, Victoriei 76, 430122 Baia Mare, Romania\\
E-mail: plaurian@yahoo.com\\

\noindent T. Pal, A. M. J. High School, Mankhamar, Bankura -- 722144, West Bengal, India\\
E-mail: tanumoypalmath@gmail.com
\end{document}